\newfont{\footsc}{cmcsc10 at 8truept}
\newfont{\footbf}{cmbx10 at 8truept}  
\newfont{\footrm}{cmr10 at 10truept} 
\newtheorem{theorem}{Theorem}
\newtheorem{Lem}{Lemma}
\title{Asymmetric Circular Graph with Hosoya Index and Negative Continued Fractions} 
\author{Takao Komatsu
%\thanks{
%}
\\
\small Department of Mathematical Sciences, School of Science\\
\small Zhejiang Sci-Tech University\\
\small Hangzhou 310018, China\\
\small \texttt{komatsu@zstu.edu.cn}
}
\date{
%\small Submitted: May 1, 2021;  Accepted: July 2, 2021.\\
%\small MR Subject Classifications: Primary 11A55; Secondary 05C90, 05C99, 11B39, 92E10.  
}
\begin{document}
\maketitle

%\begin{center}
%(Received July 21, 2021) 
%\end{center}

\bigskip

\begin{abstract}  
It has been known that the Hosoya index of caterpillar graph can be calculated as the numerator of the simple continued fraction. Recently, the author \cite{Komatsu2020} introduces a more general graph called caterpillar-bond graph and shows that its Hosoya index can be calculated as the numerator of the general continued fraction.  

In this paper, we show how the Hosoya index of the graph with non-uniform ring structure can be calculated from the negative continued fraction. We also give the relation between some radial graphs and multidimensional continued fractions in the sense of the Hosoya index.        
 \\
%{\bf Keywords:} topological index, Hosoya index, caterpillar-bond graph, negative continued fractions, non-uniform structure, cyclic graph, multidimensional continued fractions, radial crystals            
\end{abstract}

\baselineskip=0.30in

\section{Introduction} 

The concept of the {\it topological index} was first introduced by Haruo Hosoya in 1971 \cite{Hosoya1971} as the integer $Z:=Z(G)$ is the sum of a set of the numbers $p(G,k)$, which is the number of ways for choosing $k$ disjoin edges from $G$. By using the set of $p(G,k)$, the topological index $Z$ is defined by 
$$
Z=\sum_{k=0}^m p(G,k)\,. 
$$   
As more different types of topological indices have been discovered in chemical graph theory (e.g., see \cite{DB}), the first topological index is also called {\it Hosoya index} or the {\it $Z$ index} nowadays.  Topological indices are used for example in the development of quantitative structure-activity relationships (QSARs) in which the biological activity or other properties of molecules are correlated with their chemical structure.  

The topological index is closely related to Fibonacci $F_n$ \cite{Hosoya1973} and related numbers \cite{Hosoya2007c}.  For the path graph $S_n$, we have $Z(S_n)=F_{n+1}$, where $F_n=F_{n-1}+F_{n-2}$ ($n\ge 2$) with $F_0=0$ and $F_1=1$.  For the monocyclic graph $C_n$, we have $Z(C_n)=L_n$, where $L_n$ is the Lucas number, defined by $L_n=L_{n-1}+L_{n-2}$ ($n\ge 2$) with $L_0=2$ and $L_1=1$.  
 
In \cite{Hosoya2007}, manipulation of continued fraction, either finite and infinite, was shown to be greatly simplified and systematized by introducing the topological index $Z$ and caterpillar graph $C_n(x_1,x_2,\dots,x_n)$. 
A {\it caterpillar graph} is a tree containing a path graph such that every edge has one or more endpoints in that path.   In \cite{Hosoya2007}, it is shown that for $n\ge 1$ 
\begin{equation}
Z\bigl(C_n(a_0,a_1,\dots,a_{n-1})\bigr)=p_{n-1}\,, 
\label{caterpillar-cf} 
\end{equation}
where 
$$
\frac{p_{n-1}}{q_{n-1}}=a_0+\cfrac{1}{a_1+{\atop\ddots+\cfrac{1}{a_{n-1}}}}\quad\hbox{with}\quad \gcd(p_{n-1},q_{n-1})=1,~ a_i\ge 1~(0\le i\le n-1)\,. 
$$ 
In \cite{LZMC}, it is shown that the Hosoya index of the fractal graph can be given by the order and degrees of all the vertices of the graph.   

In \cite{Komatsu2020}, we give graphs whose topological index are exactly equal to the number $u_n$, satisfying the three-term recurrence relation 
$$
u_n=a u_{n-1}+b u_{n-2}\quad(n\ge 2)\quad u_0=0\quad\hbox{and}\quad u_1=u\,,   
$$  
where $a$, $b$ and $u$ are positive integers. This is illustrated by the so-called {\it caterpillar-bond graph} $D_n(x_1,x_2,\dots,x_n;y_1,\dots,y_{n-1})$. We show an interpretation from the continued fraction expansion in a more general case, so that the topological index can be computed easily. We also show how to calculate Hosoya index of the given tree graph or the graph including circle type graphs, by using the branched continued fractions. 
The Hosoya index can be calculated easily for the graph with uniform ring structure by transforming it into the normal caterpillar-bond graph. 

In this paper, we show how the Hosoya index of the graph with non-uniform ring structure can be calculated from the negative continued fraction. We also give the relation between some radial graphs and multidimensional continued fractions in the sense of the Hosoya index.

\section{Preliminaries}  

Any real number can be expressed as a generalized continued fraction expansion of the form 
$$ 
\alpha
=a_0+\cfrac{b_1}{a_1+\cfrac{b_2}{a_2+{\atop\ddots}}}\,.  
$$  
In this paper, we assume that all numbers $a_0,a_1,a_2,\dots$ and $b_1,b_2,\dots$ are positive integers.  
The $n$-th convergent $p_n/q_n$ is given by 
\begin{align*}   
\frac{p_n}{q_n}
&=a_0+\cfrac{b_1}{a_1+\cfrac{b_2}{a_2+{\atop\ddots{\atop\displaystyle +\dfrac{b_n}{a_n}}}}}\\
&:=a_0+\frac{b_1}{a_1}{\atop+}\frac{b_2}{a_2}{\atop+\cdots+}\frac{b_n}{a_n}\,. 
\end{align*} 
Here, $p_n$ and $q_n$ satisfy the recurrence relation: 
\begin{align}
p_n&=a_n p_{n-1}+b_n p_{n-2}\quad(n\ge 2),& p_{0}&=a_0, &p_{1}&=a_0 a_1+b_1,\label{rec:pn}\\
q_n&=a_n q_{n-1}+b_n q_{n-2}\quad(n\ge 2),& q_{0}&=1, &q_{1}&=a_1\,.
\label{rec:qn}
\end{align}

Notice that the expression of the generalized continued fraction expansion is not unique, and $p_n$ and $q_n$ are not necessarily coprime.  

In \cite{Komatsu2020}, we introduced a combined graph of the caterpillar graph and the bond graph as their generalization. That is,  
{\it caterpillar-bond graph}  $D_n(x_1,x_2,\dots,x_n;y_1,y_2,\dots,y_{n-1})$

\begin{align*} 
&\overbrace{\phantom{\hspace{0.7in}}}^{x_1-1}~ 
\overbrace{\phantom{\hspace{0.7in}}}^{x_2-1}\qquad\qquad\quad\quad  
\overbrace{\phantom{\hspace{0.7in}}}^{x_n-1}\\* 
&\xymatrix@=16pt{ 
*=0{\bullet}&*=0{\bullet}&*=0{\bullet}&*=0{\bullet}&*=0{\bullet}&*=0{\bullet}&&&&*=0{\bullet}&*=0{\bullet}&*=0{\bullet}\\
&*=0{\bullet}\ar @{-} [lu]\ar @{-} [u]\ar @{-} [ru]\ar @/^/ @{-} [rrr]\ar @/^0.2pc/ @{-} [rrr] \ar @/_0.2pc/ @{-} [rrr]  
\ar @/_/ @{-} [rrr]_{y_1} &&&*=0{\bullet}\ar @{-} [lu]\ar @{-} [u]\ar @{-} [ru]\ar @/^/ @{-} [rr]\ar @/^0.2pc/ @{-} [rr] \ar @/_0.2pc/ @{-} [rr] 
\ar @/_/ @{-} [rr]_{y_2}&&*=0{\bullet}\ar @/^/ @{--} [rr]\ar @/^0.2pc/ @{--} [rr] \ar @/_0.2pc/ @{--} [rr]  
\ar @/_/ @{--} [rr]&&*=0{\bullet}&&*=0{\bullet}\ar @{-} [lu]\ar @{-} [u]\ar @{-} [ru]\ar @/^/ @{-} [ll]^{y_{n-1}}\ar @/^0.2pc/ @{-} [ll] \ar @/_0.2pc/ @{-} [ll]\ar @/_/ @{-} [ll]&\\
}
\end{align*}

The Hosoya index of the caterpillar-bond graph can be calculated easily by using the continued fraction expansion \cite{Komatsu2020}.    

\begin{Lem}  
For $n\ge 1$, 
$$
Z\bigl(D_n(a_0,a_1,\dots,a_{n-1};b_1,\dots,b_{n-1})\bigr)=p_{n-1}\,, 
$$ 
where $p_{n-1}$ is the numerator of the convergent of the continued fraction expansion 
\begin{equation}   
\frac{p_{n-1}}{q_{n-1}}
=a_0+\cfrac{b_1}{a_1+\cfrac{b_2}{a_2+{\atop\ddots{\atop\displaystyle +\dfrac{b_{n-1}}{a_{n-1}}}}}} 
\label{pnqn}  
\end{equation}   
and $p_j$'s and $q_j$'s satisfy the recurrence relations in (\ref{rec:pn}) and (\ref{rec:qn}), respectively. 
\label{lem:main}  
\end{Lem}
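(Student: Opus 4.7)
My plan is to induct on $n$, showing that $Z(D_n)$ satisfies the same three-term recurrence (\ref{rec:pn}) as the numerators $p_{n-1}$. The base cases are direct enumeration: $D_1(a_0)$ is a star with $a_0-1$ pendant leaves, so $Z(D_1) = 1 + (a_0-1) = a_0 = p_0$; and $D_2(a_0,a_1;b_1)$ gives $Z(D_2) = a_0 a_1 + b_1 = p_1$ by splitting the matchings according to whether one of the $b_1$ parallel edges is used (if not, we independently choose at most one leaf edge at each of $v_0$ and $v_1$, giving $a_0 \cdot a_1$ matchings; if yes, the $b_1$ parallel edges give $b_1$ further matchings).

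For the inductive step, I would partition the matchings of $D_n(a_0,\dots,a_{n-1};b_1,\dots,b_{n-1})$ according to how the rightmost backbone vertex $v_{n-1}$ is covered. There are three mutually exclusive scenarios: (i) no matching edge meets $v_{n-1}$, in which case the matching is exactly a matching of $D_{n-1}(a_0,\dots,a_{n-2};b_1,\dots,b_{n-2})$ together with isolated leaves, contributing $Z(D_{n-1})$; (ii) one of the $a_{n-1}-1$ pendant leaf edges at $v_{n-1}$ is selected, and after removing both endpoints the remaining graph is again $D_{n-1}$ with isolated vertices, contributing $(a_{n-1}-1)\,Z(D_{n-1})$; and (iii) one of the $b_{n-1}$ parallel edges between $v_{n-2}$ and $v_{n-1}$ is selected, which wipes out both endpoints together with the whole parallel bundle and leaves $D_{n-2}(a_0,\dots,a_{n-3};b_1,\dots,b_{n-3})$ plus isolated vertices, contributing $b_{n-1}\,Z(D_{n-2})$. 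Summing yields
\[
Z(D_n) = a_{n-1}\,Z(D_{n-1}) + b_{n-1}\,Z(D_{n-2}),
\]
which is exactly the recurrence (\ref{rec:pn}) satisfied by the $p_{n-1}$, so the inductive hypothesis closes the argument.

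The only delicate bookkeeping is the convention that the $b_{n-1}$ parallel edges are distinguishable (so that choosing ``one of them'' contributes a factor of $b_{n-1}$ and not $1$), and that deleting the endpoint $v_{n-1}$ removes the entire parallel bundle at once. Once these multigraph conventions are fixed, the argument is a routine application of the standard edge-deletion identity for the matching polynomial, and I do not expect any genuine obstacle: the caterpillar-bond graph $D_n$ is evidently engineered so that its matching recursion mirrors the numerator recursion (\ref{rec:pn}) term by term.
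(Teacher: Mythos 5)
Your proof is correct. The paper itself gives no proof of Lemma \ref{lem:main}, importing it from \cite{Komatsu2020}; your argument --- classifying the matchings of $D_n$ by how the last backbone vertex is covered, which is exactly the vertex-deletion identity of Lemma \ref{gp}(2) applied at $v_{n-1}$, yielding $Z(D_n)=a_{n-1}Z(D_{n-1})+b_{n-1}Z(D_{n-2})$ in parallel with (\ref{rec:pn}) --- is the standard derivation and matches the approach of the cited source.
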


The other graphs, including monocycle graph, cannot be calculated by this formula directly. But, by the equivalent transformation, it is possible to calculate the Hosoya index for some circular graphs with perfect point symmetry structure \cite{Komatsu2020}. For example, the cycle graph C$_n$ can be transformed into the caterpillar-bond graph 
$${\rm D}_{n-1}(\underbrace{1,\dots,1}_{n-2},2;2,\underbrace{1,\dots,1}_{n-3})\,. 
$$   
It is so understandable that ${\rm Z}({\rm C}_n)=L_n$ (\cite{Hosoya2007c}), where $L_n$ are Lucas numbers, defined by $L_n=L_{n-1}+L_{n-2}$ ($n\ge 2$) with $L_0=2$ and $L_1=1$.  
For example, we can see it by cycloraraffin C$_n$H$_{2 n}$.   

\begin{align*}   
&\qquad\xymatrix@=6pt{
&*=0{\bullet}\ar@{-}[ld]\ar@{-}[rd]&\\
*=0{\bullet}\ar@{-}[d]&&*=0{\bullet}\ar@{-}[d]\\
*=0{\bullet}\ar@{-}[rd]&&*=0{\bullet}\ar@{-}[ld]\\
&*=0{\bullet}&
}\qquad\Longleftrightarrow\qquad 
\xymatrix@=6pt{
&*=0{\bullet}\ar@{-}[ld]\ar@{-}[rd]&\\
*=0{\bullet}\ar@{-}[d]&&*=0{\bullet}\ar@{-}[d]\\
*=0{\bullet}\ar@/_0.2pc/@{-}[rd]&*=0{\circ}&*=0{\bullet}\\
&*=0{\bullet}\ar@/_0.4pc/@{-}[u]&
}\qquad\Longleftrightarrow\qquad 
\xymatrix@=15pt{  
&&&&*=0{\bullet}\ar@{-}[d]\\
*=0{\bullet}\ar@/^/@{-}[r]\ar@/_/@{-}[r]&*=0{\bullet}\ar@{-}[r]&*=0{\bullet}\ar@{-}[r]&*=0{\bullet}\ar@{-}[r]&*=0{\bullet}
}\\*
& {\rm C}_6\quad \hbox{cyclohexane}\qquad\qquad\qquad\qquad\qquad\qquad {D}_5(1,1,1,1,2;2,1,1,1)
\end{align*} 
%\begin{center}\texttt{Figure 4. Transformation of C$_n$}\end{center}   

In addition, comb related graphs, including monocycle graphs C$_n$ and cyclic comb graphs CU$_n$ \cite{Hosoya2007c}, can be converted into the caterpillar-bond graphs.

Indeed, 
CV$_n$ can be converted into D$_n(\underbrace{3,\dots,3}_n;2,\underbrace{1,\dots,1}_{n-2})$ by cutting one edge to another edge into bond edges by turning around.    
$$  
\xymatrix@=3pt{
&*=0{\bullet}\ar@{-}[rdd]&&&&*=0{\bullet}\ar@{-}[ldd]&\\
*=0{\bullet}\ar@{-}[rrd]&&&&&&*=0{\bullet}\ar@{-}[lld]\\
&&*=0{\bullet}\ar@{-}[rr]\ar@/_0.2pc/@{-}[dd]&&*=0{\bullet}\ar@{-}[dd]&&\\
&&&&&&\\
&&*=0{\circ}\ar@/_0.2pc/@{-}[rr]&&*=0{\bullet}&&\\
*=0{\bullet}\ar@{-}[rru]&&&&&&*=0{\bullet}\ar@{-}[llu]\\
&*=0{\bullet}\ar@{-}[ruu]&&&&*=0{\bullet}\ar@{-}[luu]&
}
\qquad\qquad  \Longrightarrow \qquad\qquad
\xymatrix@=6pt{  
*=0{\bullet}\ar@{-}[rdd]&&*=0{\bullet}\ar@{-}[ldd]&*=0{\bullet}\ar@{-}[rdd]&&*=0{\bullet}\ar@{-}[ldd]&*=0{\bullet}\ar@{-}[rdd]&&*=0{\bullet}\ar@{-}[ldd]&*=0{\bullet}\ar@{-}[rdd]&&*=0{\bullet}\ar@{-}[ldd]\\
&&&&&&&&&&&\\
&*=0{\circ}\ar@/^/@{-}[rrr]\ar@/_/@{-}[rrr]&&&*=0{\bullet}\ar@{-}[rrr]&&&*=0{\bullet}\ar@{-}[rrr]&&&*=0{\bullet}&
} 
$$ 
%\begin{center}\texttt{Figure 5. Transformation of CV$_n$}\end{center}   

In general, let ${\rm C}_{n,a,b}$ be the graph with $a$ additional branches at each vertex and a $b$-tuple on each edge of monocycle ${\rm C}_n$.     

Then, ${\rm C}_{n,a,b}$ can be transformed into the caterpillar-bond graph 
$${\rm D}_n(\underbrace{a+1,\dots,a+1}_n;2 b,\underbrace{b,\dots,b}_{n-2})$$ without changing of the numbers of vertices and edges. That is, we have \cite{Komatsu2020} 
$$
{\rm Z}({\rm C}_{n,a,b})={\rm Z}\bigl({\rm D}_n(\underbrace{a+1,\dots,a+1}_n;2 b,\underbrace{b,\dots,b}_{n-2})\bigr)\,. 
$$ 

\begin{align*}  
&\xymatrix@=3pt{
*=0{\bullet}&*=0{\bullet}\ar@{-}[rdd]&&&&*=0{\bullet}\ar@{-}[ldd]&*=0{\bullet}\\
*=0{\bullet}\ar@{-}[rrd]&&&&&&*=0{\bullet}\ar@{-}[lld]\\
&&*=0{\bullet}\ar@{-}[rr]\ar@/^0.2pc/@{-}[rr]\ar@/_0.2pc/@{-}[rr]\ar@{-}[dd]\ar@/^0.2pc/@{-}[dd]\ar@/_0.2pc/@{-}[dd]\ar@{-}[lluu]&&*=0{\bullet}\ar@{-}[dd]\ar@/^0.2pc/@{-}[dd]\ar@/_0.2pc/@{-}[dd]\ar@{-}[rruu]&&\\
&&&&&&\\
&&*=0{\bullet}\ar@{-}[rr]\ar@/^0.2pc/@{-}[rr]\ar@/_0.2pc/@{-}[rr]\ar@{-}[lldd]&&*=0{\bullet}\ar@{-}[rrdd]&&\\
*=0{\bullet}\ar@{-}[rru]&&&&&&*=0{\bullet}\ar@{-}[llu]\\
*=0{\bullet}&*=0{\bullet}\ar@{-}[ruu]&&&&*=0{\bullet}\ar@{-}[luu]&*=0{\bullet}
}\qquad\qquad \Longrightarrow \qquad\qquad 
\xymatrix@=6pt{  
*=0{\bullet}\ar@{-}[rdd]&*=0{\bullet}\ar@{-}[dd]&*=0{\bullet}\ar@{-}[ldd]&*=0{\bullet}\ar@{-}[rdd]&*=0{\bullet}\ar@{-}[dd]&*=0{\bullet}\ar@{-}[ldd]&*=0{\bullet}\ar@{-}[rdd]&*=0{\bullet}\ar@{-}[dd]&*=0{\bullet}\ar@{-}[ldd]&*=0{\bullet}\ar@{-}[rdd]&*=0{\bullet}\ar@{-}[dd]&*=0{\bullet}\ar@{-}[ldd]\\
&&&&&&&&&&&\\
&*=0{\bullet}\ar@/^0.2pc/@{-}[rrr]\ar@/_0.2pc/@{-}[rrr]\ar@/^0.4pc/@{-}[rrr]\ar@/_0.4pc/@{-}[rrr]\ar@/^0.6pc/@{-}[rrr]\ar@/_0.6pc/@{-}[rrr]&&&*=0{\bullet}\ar@{-}[rrr]\ar@/^0.2pc/@{-}[rrr]\ar@/_0.2pc/@{-}[rrr]&&&*=0{\bullet}\ar@{-}[rrr]\ar@/^0.2pc/@{-}[rrr]\ar@/_0.2pc/@{-}[rrr]&&&*=0{\bullet}&
}\\*
&\qquad{\rm C}_{4,3,3}\qquad\qquad\qquad\qquad\qquad\qquad\qquad {\rm D}_4(4,4,4,4;6,3,3) 
\end{align*}

%%%%%%%%%%%%%%%
%%%%%%%%%%%%%%
%%%%%%%%%%%%%%

However, there was no way to transform a circle type graph without uniform pattern into a caterpillar-bond graph directly. 
For example, consider the very famous benzene C$_6$H$_6$.  

$$
\xymatrix@=8pt{ 
&&*{\rm H}\ar@{-}[d]&&\\
*{\rm H}\ar@{-}[rd]&&*{\rm C}\ar@{-}[ld]\ar@/^0.1pc/@{-}[rd]\ar@/_0.1pc/@{-}[rd]&&*{\rm H}\ar@{-}[ld]\\
&*{\rm C}\ar@/^0.1pc/@{-}[d]\ar@/_0.1pc/@{-}[d]&&*{\rm C}\ar@{-}[d]&\\
&*{\rm C}\ar@{-}[ld]\ar@{-}[rd]&&*{\rm C}\ar@{-}[rd]\ar@/^0.1pc/@{-}[ld]\ar@/_0.1pc/@{-}[ld]&\\
*{\rm H}&&*{\rm C}\ar@{-}[d]&&*{\rm H}\\
&&*{\rm H}&&
}
$$   
%\begin{center}\texttt{Figure 8. C$_6$H$_6$}\end{center}   
In other words, there has been no convenient method to calculate Hosoya index of a circle type graph without uniform pattern directly by using continued fractions.   
In the next section, we show a relation between such graphs and negative continued fractions.

\section{Graph with non-uniform ring structure and negative continued fractions}  

In order to prove our main result, we need the known relations, which were first suggested by Hosoya \cite{Hosoya1971,Hosoya1973} and were elaborated by Gutman and Polansky \cite{GP}.  Though we need only the first one in this paper, we also list related relations for convenience.   

\begin{Lem}  
\begin{enumerate}  
\item If $e=u v$ is an edge of a graph $G$, then $Z(G)=Z(G-e)+Z(G-\{u,v\})$.  
\item If $v$ is a vertex of a graph $G$, then $Z(G)=Z(G-v)+\sum_{u v}Z(G-u v)$, where the summation extends over all vertices adjacent to $v$.   
\item If $G_1,G_2,\dots,G_k$ are connected components of $G$, then $Z(G)=\prod_{i=1}^k Z(G_i)$.  
\end{enumerate} 
\label{gp} 
\end{Lem}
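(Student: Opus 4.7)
The plan is to establish each of the three identities by a direct combinatorial argument based on the definition $Z(G) = \sum_{k\ge 0} p(G,k)$, where $p(G,k)$ counts the $k$-matchings (sets of $k$ pairwise disjoint edges) of $G$. In each case I would partition the set of all matchings of $G$ into two or more classes that can be re-encoded as matchings of smaller subgraphs; the identity on $Z$-values then follows by summing over $k$.

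For part (1), I would split the $k$-matchings $M$ of $G$ according to whether $M$ contains the edge $e = uv$. Matchings avoiding $e$ are in obvious bijection with $k$-matchings of $G - e$. If instead $e \in M$, then no other edge of $M$ may be incident to $u$ or $v$, so $M \setminus \{e\}$ is a $(k-1)$-matching of $G - \{u,v\}$; this map is evidently a bijection. Therefore $p(G,k) = p(G-e,k) + p(G-\{u,v\},k-1)$, and summing over $k$ gives the claim.

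For part (2), I would partition the matchings of $G$ by whether they saturate $v$. Matchings leaving $v$ unsaturated lie entirely in $G - v$. A matching saturating $v$ must contain exactly one edge $uv$ with $u$ a neighbor of $v$, and, as in part (1), the remaining edges form a matching of $G - \{u,v\}$. Summing over neighbors $u$ of $v$ and over $k$ yields the formula. Part (3) is the easiest: since no edge of $G$ crosses between distinct components $G_i$, a matching of $G$ is nothing but a choice of one matching in each $G_i$, so $p(G,k) = \sum_{k_1+\cdots+k_m = k}\prod_i p(G_i,k_i)$, and summing over $k$ converts this convolution into $\prod_i Z(G_i)$.

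There is no serious obstacle here: all three statements reduce to bijections on matchings. The only point deserving care, and the place where a careless reader might stumble, is the observation in (1) and (2) that including the edge $uv$ forbids every other edge at $u$ or $v$, which is precisely why deletion of the vertex pair $\{u,v\}$ (rather than deletion of just the edge $e$) is the correct reduction after $e$ is selected.
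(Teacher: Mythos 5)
Your argument is correct and is the standard one: the paper states this lemma without proof, citing Hosoya and Gutman--Polansky, and the matching-partition bijections you describe (split by whether the matching uses $e$, by whether it saturates $v$, and by components) are exactly the classical justification of these recurrences. The only cosmetic point is that in part (3) you reuse the letter $k$ both for the number of components and for the matching size; otherwise nothing needs fixing.
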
  

First, we consider a graph in which the annular part has a staggered structure of $r$ single and $s$ double lines, with $m$ branches at each vertex. We denote it by ${\rm U}_{n,m}^{(r,s)}$, where $n,r,s\ge 1$ and $m\ge 0$ are integers. For simplicity, its Hosoya index is denoted by $u_n:={\text Z}({\rm U}_{n,m}^{(r,s)})$. Hosoya index of ${\rm U}_{n,m}^{(r,s)}$ is given by the numerator of the convergent of a {\it negative} (or backward) {\it continued fraction}.  

\begin{align*}  
&\xymatrix@=4pt{
*=0{\bullet}&*=0{\bullet}\ar@{-}[rdd]&&&&*=0{\bullet}\ar@{-}[ldd]&*=0{\bullet}\\
*=0{\bullet}\ar@{-}[rrd]&&&&&&*=0{\bullet}\ar@{-}[lld]\\
&&*=0{\bullet}\ar@{-}[rr]\ar@/^0.2pc/@{-}[rr]\ar@/_0.2pc/@{-}[rr]\ar@/^0.4pc/@{-}[rr]\ar@/_0.4pc/@{-}[rr]\ar@{-}[dd]\ar@{-}[lluu]&&*=0{\bullet}\ar@{-}[dd]\ar@{-}[rruu]&&\\
&&&&&&\\
&&*=0{\bullet}\ar@{-}[rr]\ar@/^0.2pc/@{-}[rr]\ar@/_0.2pc/@{-}[rr]\ar@/^0.4pc/@{-}[rr]\ar@/_0.4pc/@{-}[rr]\ar@{-}[lldd]&&*=0{\bullet}\ar@{-}[rrdd]&&\\
*=0{\bullet}\ar@{-}[rru]&&&&&&*=0{\bullet}\ar@{-}[llu]\\
*=0{\bullet}&*=0{\bullet}\ar@{-}[ruu]&&&&*=0{\bullet}\ar@{-}[luu]&*=0{\bullet}
}\\*
&\qquad{\rm U}_{2,3}^{(1,5)}
\end{align*}

\begin{theorem}  
For integers $n,r,s\ge 1$ and $m\ge 0$, we have 
$$
\frac{u_{n}}{u_{n-1}}=\underbrace{M-\frac{r s}{M}{\atop-}\frac{r s}{M}{\atop-\dots-}\frac{r s}{M}}_{n}{\atop-}\frac{M}{2}\,, 
$$ 
where $M=D_2(m+1,m+1;r+s)=(m+1)^2+r+s$.  
Furthermore, for $n\ge 1$ we have 
$$
u_{n}=\left(\frac{M+\sqrt{M^2-4 r s}}{2}\right)^n+\left(\frac{M-\sqrt{M^2-4 r s}}{2}\right)^n\,.
$$ 
\label{th:non-uniform}
\end{theorem}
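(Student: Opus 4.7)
The plan is to reduce the theorem to a single two-term linear recurrence
$$u_n = M u_{n-1} - rs u_{n-2} \quad (n\ge 2),$$
with initial values $u_0 = 2$ and $u_1 = M$. Once this is in hand, both statements drop out: the closed form follows from solving the characteristic equation $x^2 - M x + rs = 0$ (whose roots are exactly $(M \pm \sqrt{M^2 - 4 rs})/2$) and observing that the initial conditions force both coefficients in $u_n = C_1 \alpha^n + C_2 \beta^n$ to equal $1$; the continued fraction formula is obtained by iterating the identity $u_n/u_{n-1} = M - rs/(u_{n-1}/u_{n-2})$ until the bottom level reaches $u_1/u_0 = M/2$, producing exactly $n$ copies of $M$ followed by a terminating $-M/2$.

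For the base case, observe that ${\rm U}_{1,m}^{(r,s)}$ collapses to two central vertices joined by $r+s$ parallel edges with $m$ pendants on each side, which is the caterpillar-bond graph $D_2(m+1,m+1;r+s)$; Lemma~\ref{lem:main} then yields $u_1 = (m+1)^2 + r + s = M$. For the recurrence, I would apply Lemma~\ref{gp}(1) iteratively to the $r$ parallel edges of the $r$-bond between a pair of adjacent central vertices $v_1$ and $w_1$, giving $u_n = Z(G_1) + r \cdot Z(G_2)$. Here $G_1$ is the caterpillar-bond chain of length $2n$ with bonds alternating $s,r,s,\ldots,r,s$, and $G_2$ is the caterpillar-bond chain of length $2n-2$ with bonds $r,s,r,\ldots,r$ together with $2m$ isolated leaves (which contribute a factor of $1$). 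Applying the continued-fraction recurrence of Lemma~\ref{lem:main} twice to each chain and eliminating the odd-indexed intermediate step yields the two-step recurrence $X_{2j} = M X_{2j-2} - rs X_{2j-4}$ for $j \ge 2$, valid for both chain families. Substituting this into $u_n = Z(G_1) + r Z(G_2)$ delivers $u_n = M u_{n-1} - rs u_{n-2}$ for $n \ge 3$, and the remaining case $n = 2$ is confirmed by direct computation giving $u_2 = M^2 - 2rs$.

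The main technical obstacle is the bookkeeping of the alternating $r$- and $s$-bond pattern as one iterates the continued fraction recurrence on the two chains $G_1$ and $G_2$, ensuring that the parity-dependent intermediate terms cancel cleanly to produce the recurrence with coefficients $M$ and $-rs$. A more conceptual alternative packages the same information as a $2\times 2$ transfer matrix $T$ with $\operatorname{tr}(T) = M$ and $\det(T) = rs$ satisfying $u_n = \operatorname{tr}(T^n)$, from which the recurrence is instantaneous via Cayley--Hamilton ($T^2 = M T - rs\, I$); but the direct route through Lemma~\ref{gp}(1) stays entirely within the tools already introduced in the excerpt.
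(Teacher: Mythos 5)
Your proposal is correct and follows essentially the same route as the paper: decompose ${\rm U}_{n,m}^{(r,s)}$ via Lemma~\ref{gp}(1) by deleting the parallel edges of one bond (you cut the $r$-bond, the paper cuts the $s$-bond, which is immaterial by symmetry), show that the Hosoya indices of the two resulting caterpillar-bond chains each satisfy the two-step recurrence with coefficients $M$ and $-rs$ by eliminating the odd-indexed terms, and then read off both the negative continued fraction and the Binet-type closed form from $u_n=Mu_{n-1}-rsu_{n-2}$ with $u_0=2$, $u_1=M$. Your handling of the initial condition (verifying $u_2=M^2-2rs$ directly rather than introducing the paper's artificial value $g_0=1/s$) is a minor, arguably cleaner, variation.
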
  

\noindent 
{\it Remark.}  
(1) 
In order to keep the recurrence relations, cancelations are not done in the calculation from the continued fraction to the convergent. Namely, one should calculate as 
$$
\frac{a}{b}+\frac{c}{d}=\frac{a d+b c}{b d}
$$  
even though $\gcd(b,d)\ne 1$.  For example, 
$$
\frac{2}{3}+\frac{3}{3}=\frac{15}{9}\quad\hbox{and}\quad \frac{3}{4}+\frac{5}{6}=\frac{38}{24}
$$ 
instead of $5/3$ and $19/12$, respectively.  

\noindent 
(2) If $M>2 r s$, the negative continued fraction in Theorem \ref{th:non-uniform} can be converted (see, e.g.,  \cite{vdP}) into 
$$
\frac{u_{n}}{u_{n-1}}=M-1+\underbrace{\frac{1}{1}{\atop+}\frac{r s}{M-r s-1}{\atop+}\frac{1}{1}{\atop+}\frac{r s}{M-r s-1}{\atop+\dots+}\frac{r s}{M-r s-1}{\atop+}\frac{1}{1}}_{2 n-3}{\atop+}\frac{2 r s}{M-2 r s}\,, 
$$ 
which corresponds with the caterpillar-bond graph $D_{2 n-1}(M-1,1,M-r s-1,1,M-r s-1,\dots,M-r s-1,1,M-2 r s;1,r s,1,r s,\dots,r s,1,2 r s)$. 

\noindent 
{\bf Example.}  
The structure of benzene C$_6$H$_6$ is represented by ${\rm U}_{3,1}^{(2,1)}$. Hence, we have 
$$
\frac{u_3}{u_2}=7-\cfrac{2}{7-\cfrac{2}{7-\cfrac{7}{2}}}=\frac{301}{45}\,. 
$$ 
Therefore, its Hosoya index is given by ${\text Z}({\text U}_{3,1}^{(2,1)})=301$.  In fact,  
$$
{\text Z}({\rm U}_{3,1}^{(2,1)})=\left(\frac{7+\sqrt{41}}{2}\right)^3+\left(\frac{7-\sqrt{41}}{2}\right)^3=301\,. 
$$ 
When $m=0$, $r=1$ and $s=2$, the sequence of the numbers $u_n={\rm Z}({\text U}_{n,0}^{(1,2)})$ is given by 
$$
2, 4, 12, 40, 136, 464, 1584, 5408, 18464, 63040, 215232,\dots 
$$ 
\cite[A056236]{oeis}. Similarly, the corresponding sequences for ${\text U}_{n,0}^{(1,3)}$, ${\text U}_{n,0}^{(1,4)}$, ${\text U}_{n,0}^{(2,3)}$ and ${\text U}_{n,0}^{(3,4)}$ are found in \cite[A228569,A228842,A094433,A074601]{oeis}, respectively.  

\begin{proof}[Proof of Theorem \ref{th:non-uniform}.] 
Let integers $n,r,s\ge 1$ and $m\ge 0$. 
Using Lemma \ref{gp}, we divide the graph ${\rm U}_{n,m}^{(r,s)}$ into two subgraphs. By repeating $s$ times to remove one edge from the $s$-hold bond part, we have 
\begin{multline}
u_n:={\text Z}({\rm U}_{n,m}^{(r,s)})\\
={\text Z}\bigl(D_{2 n}(m+1,\dots,m+1;r,s,r,\dots,r)\bigr)+s\cdot{\text Z}\bigl(D_{2 n-2}(m+1,\dots,m+1;s,r,s,\dots,s)\bigr)\,. 
\label{eq:g-2subg}
\end{multline}  
For the first subgraph, we set 
$$
\frac{f_{2 n}}{f_{2 n-1}}:=\underbrace{m+1+\frac{r}{m+1}{\atop+}\frac{s}{m+1}{\atop+}\frac{r}{m+1}{\atop+\dots+}\frac{r}{m+1}}_{2 n}\,,  
$$ 
so that $f_{2 n}={\text Z}\bigl(D_{2 n}(m+1,\dots,m+1;r,s,r,\dots,r)\bigr)$. Then the sequence $\{f_n\}_n$ satisfies the recurrence relation 
$$
f_n=(m+1)f_{n-1}+
\begin{cases}
r f_{n-2}&\text{if $n$ is even};\\
s f_{n-2}&\text{if $n$ is odd}
\end{cases}
$$ 
with $f_0=1$ and $f_1=m+1$.  Thus, the sequence $\{f_{2 n}\}_n$ satisfies the recurrence relation 
\begin{align*}  
f_{2 n}&=(m+1)f_{2 n-1}+r f_{2 n-2}\\
&=(m+1)\bigl((m+1)f{2 n-2}+s f_{2 n-3}\bigr)+r f_{2 n-2}\\
&=\bigl((m+1)^2+r\bigr)f_{2 n-2}+s(f_{2 n-2}-r f_{2 n-4})\\ 
&=\bigl((m+1)^2+r+s\bigr)f_{2 n-2}-r s f_{2 n-4}
\end{align*}
with $f_0=1$ and $f_2=(m+1)^2+r$.  

For the second subgraph, we set 
$$
\frac{g_{2 n}}{g_{2 n-1}}:=\underbrace{m+1+\frac{s}{m+1}{\atop+}\frac{r}{m+1}{\atop+}\frac{s}{m+1}{\atop+\dots+}\frac{s}{m+1}}_{2 n-2}\,,  
$$ 
so that $g_{2 n}={\text Z}\bigl(D_{2 n-2}(m+1,\dots,m+1;s,r,s,\dots,s)\bigr)$.  
Then the sequence $\{g_n\}_n$ satisfies the recurrence relation 
$$
g_n=(m+1)g_{n-1}+
\begin{cases}
s g_{n-2}&\text{if $n$ is even};\\
r g_{n-2}&\text{if $n$ is odd}
\end{cases}
$$ 
with $g_0=1/s$, $g_1=0$ and $g_2=1$.  Thus, similarly, the sequence $\{g_{2 n}\}_n$ satisfies the recurrence relation 
$$  
g_{2 n}=\bigl((m+1)^2+r+s\bigr)g_{2 n-2}-r s g_{2 n-4}
$$ 
with $g_0=1/s$, $g_2=1$ and $g_4=(m+1)^2+s$.  

Since both $f_{2 n}$ and $g_{2 n}$ satisfy the same recurrence relation, the sequence $\{u_n\}_n$ satisfy the same recurrence relation $u_n=M u_{n-1}-r s u_{n-2}$ with 
 $u_0=f_0+s g_0=2$ and $u_1=f_2+s g_2=(m+1)^2+r+s:=M$. 
Therefore,  
\begin{align*}  
\frac{u_n}{u_{n-1}}&=M-\dfrac{r s}{\frac{u_{n-1,m}}{u_{n-2,m}}}\\ 
&=M-\cfrac{r s}{M-\cfrac{r s}{\frac{u_{n-2,m}}{u_{n-3,m}}}}\\
&=\cdots\\
&=M-\cfrac{r s}{M-\cfrac{r s}{M-{\atop\ddots-\dfrac{r s}{\frac{u_{1}}{u_{0}}}}}}\\
&=M-\cfrac{r s}{M-\cfrac{r s}{M-{\atop\ddots-\dfrac{r s}{M-\dfrac{M}{2}}}}}\,.
\end{align*} 

In addition, since the roots of $x^2-M x+r s=0$ are given by 
$$
\alpha,\beta=\frac{M\pm\sqrt{M^2-4 r s}}{2}\,, 
$$ 
together with the initial values, we have $u_n=\alpha^n+\beta^n$.   
\end{proof}

\section{Radial graph and multidimensional continued fractions}  

Another typical graph is that of radial crystals represented by ice crystals. Here, we consider the fact that each of the radial shapes is needle-shaped, and show the relationship with multidimensional continued fractions. 

We repeatedly use the convenient method (\cite{Komatsu2020}) by using the branched continued fractions ${\rm D}_n(A_0,a_1,\dots,a_{n-1};b_1,\dots,b_{n-1})\bigr)$ with a positive rational number $A_0$. 

\begin{Lem}
For some $i$ ($i\ge 1$), let $A_i$ be a positive rational number which continued fraction is given by 
$$
A_i=\frac{P_i}{Q_i}=1+\cfrac{d_1}{c_1+{\atop\ddots{\atop\displaystyle +\dfrac{d_i}{c_i}}}}
$$ 
for positive integers $c_j$ and $d_j$ ($j\ge 1$), 
according to the similar recurrence relations (\ref{rec:pn}) and (\ref{rec:qn}).  Then, for positive integers $a_h$ ($h\ne i$) and $b_h$, 
the Hosoya index of above combined caterpillar-bond graph is equal to  
$$
{\rm Z}\bigl({\rm D}_n(a_0,\dots,a_{i-1},A_i,a_{i+1},\dots,a_{n-1};b_1,\dots,b_{n-1})\bigr)=p_{n-1}\,, 
$$ 
where a positive integer $a_i$ in (\ref{pnqn}) is replaced by $A_i$.  
\label{lem:branched} 
\end{Lem}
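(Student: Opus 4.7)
The plan is to reduce Lemma \ref{lem:branched} to the integer case (Lemma \ref{lem:main}) by combining a graph-theoretic substitution with the fact that the recurrences (\ref{rec:pn}) and (\ref{rec:qn}) are linear in each $a_k$. First I would interpret the rational entry $A_i = P_i/Q_i$ geometrically. Since
$$
A_i = 1 + \cfrac{d_1}{c_1 + {\atop\ddots + \cfrac{d_i}{c_i}}},
$$
the leading ``$1$'' corresponds to keeping the backbone vertex $v_i$ of the main graph with no pendant leaves of its own, while the remainder of the continued fraction corresponds to a caterpillar-bond sub-graph $D_{i+1}(1,c_1,\dots,c_i;d_1,\dots,d_i)$ glued onto $v_i$ through a single edge. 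Thus the ``combined caterpillar-bond graph'' in the statement is an ordinary (integer-parameter) graph obtained by attaching one caterpillar-bond graph to another at vertex $v_i$.

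Next I would split the Hosoya index at this junction using Lemma \ref{gp}. Applying the edge/vertex decomposition at $v_i$, the Hosoya index of the combined graph can be written as a linear combination of the Hosoya index of the main backbone (with $a_i$ modified appropriately) and the Hosoya index of the attached branch, with coefficients exactly matching what $P_i$ and $Q_i$ contribute. By Lemma \ref{lem:main} applied to the branch, the numerator of the branch's continued fraction is $P_i$ and the denominator (the Hosoya index of the branch after removing its root) is $Q_i$, so the two pieces indeed combine as $P_i/Q_i = A_i$ multiplying the relevant contributions from the backbone.

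Finally I would match this with the formal substitution $a_i := P_i/Q_i$ in (\ref{pnqn}). Because the recurrences
$$
p_n = a_n p_{n-1} + b_n p_{n-2}, \qquad q_n = a_n q_{n-1} + b_n q_{n-2}
$$
are affine-linear in each $a_k$, substituting $a_i = P_i/Q_i$ and clearing the common factor $Q_i$ yields expressions $Q_i \cdot p_{n-1}$ and $Q_i \cdot q_{n-1}$ that are integer polynomials in $P_i,Q_i$ and in the remaining $a_k,b_k$. Following the convention in the Remark (no cancelations), this $p_{n-1}$ is precisely the integer produced by the graph-splitting step, establishing the identity. The main obstacle is the local bookkeeping at the junction vertex: verifying once and for all that gluing a sub-caterpillar-bond graph of continued-fraction value $A_i$ at $v_i$ multiplies the Hosoya index in exactly the way predicted by replacing $a_i$ with $A_i$ in the formal continued fraction. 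Once that single local identity is in hand, the global claim follows immediately, and an induction on $i$ (or on the depth $i$ of the attached continued fraction) can be used if one wishes to iterate the construction at several positions simultaneously.
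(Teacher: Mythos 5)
Your plan is sound, and in fact the paper offers nothing to compare it against: Lemma \ref{lem:branched} is stated here without proof, being imported from \cite{Komatsu2020}. So let me instead point out the two places where your sketch needs to be made precise. First, the attachment is by vertex identification, not ``through a single edge'': the root of the branch $\mathfrak{A}_i=D_{i+1}(1,c_1,\dots,c_i;d_1,\dots,d_i)$ \emph{is} the backbone vertex $v_i$ (the leading $1$ records that this root carries no pendant leaves of its own), and the first bond of the branch consists of $d_1$ parallel edges. Second, the ``local bookkeeping'' you defer amounts to exactly one counting identity: classifying the matchings of the combined graph $G$ according to whether $v_i$ is saturated by an edge of the branch or not gives
$$
Z(G)=Z(\mathfrak{A}_i-v_i)\,Z(H)+\bigl(Z(\mathfrak{A}_i)-Z(\mathfrak{A}_i-v_i)\bigr)\,Z(H-v_i)\,,
$$
where $H$ is the backbone $D_n(a_0,\dots,a_{i-1},1,a_{i+1},\dots,a_{n-1};b_1,\dots,b_{n-1})$. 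Lemma \ref{lem:main} gives $Z(\mathfrak{A}_i)=P_i$, and applied to $\mathfrak{A}_i-v_i=D_i(c_1,\dots,c_i;d_2,\dots,d_i)$ it gives $Z(\mathfrak{A}_i-v_i)=Q_i$, since the denominator sequence of $A_i$ obeys the same recurrence as the numerator sequence of the tail $c_1+d_2/(c_2+\cdots)$. On the formal side, $p_{n-1}$ is affine--linear in $a_i$, its coefficient $\partial p_{n-1}/\partial a_i$ being the product of the numerators of the two continued fractions obtained by deleting position $i$, which by Lemma \ref{gp}(3) and Lemma \ref{lem:main} equals $Z(H-v_i)$; evaluating at $a_i=1$ gives $Z(H)$. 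Substituting $a_i=P_i/Q_i$ and multiplying through by $Q_i$ --- which is precisely what the no-cancelation convention of the Remark effects on the numerator --- reproduces the displayed identity, so $Z(G)$ is the uncancelled numerator $p_{n-1}$. With these two identities written out, your argument is complete and is surely the intended proof.
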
 
 
In order to illustrate the general theory, consider the following radial graph in which each radial part is represented by the same caterpillar-bond graph $D_3(2,3,3;3,2)$. 

$$
\xymatrix@=6pt{ 
&*=0{\bullet}\ar@{-}[d]&&*=0{\bullet}\ar@{-}[r]&*=0{\bullet}\ar@{-}[dd]\ar @/^0.2pc/ @{-}[dd]\ar @/_0.2pc/ @{-}[dd]&*=0{\bullet}\ar@{-}[l]&&*=0{\bullet}\ar@{-}[d]&\\
*=0{\bullet}\ar@{-}[r]&*=0{\bullet}\ar@{-}[rrdd]\ar @/^0.2pc/ @{-}[rrdd]\ar @/_0.2pc/ @{-}[rrdd]&*=0{\bullet}\ar@{-}[rdd]&*=0{\bullet}\ar@{-}[rd]&&*=0{\bullet}\ar@{-}[ld]&*=0{\bullet}\ar@{-}[ldd]&*=0{\bullet}\ar@{-}[lldd]\ar @/^0.2pc/ @{-}[lldd]\ar @/_0.2pc/ @{-}[lldd]&*=0{\bullet}\ar@{-}[l]\\
&*=0{\bullet}\ar@{-}[rrd]&&*=0{\bullet}\ar@{-}[rdd]&*=0{\bullet}\ar @/^0.2pc/ @{-}[dd]\ar @/_0.2pc/ @{-}[dd]&*=0{\bullet}\ar@{-}[ldd]&&*=0{\bullet}\ar@{-}[lld]&\\
*=0{\bullet}\ar@{-}[d]&*=0{\bullet}\ar@{-}[rd]&*=0{\bullet}\ar@{-}[rrd]&*=0{\bullet}\ar @/^0.2pc/ @{-}[rd]\ar @/_0.2pc/ @{-}[rd]&&*=0{\bullet}\ar @/^0.2pc/ @{-}[ld]\ar @/_0.2pc/ @{-}[ld]&*=0{\bullet}\ar@{-}[lld]&*=0{\bullet}\ar@{-}[ld]&*=0{\bullet}\ar@{-}[d]\\
*=0{\bullet}\ar@{-}[rr]\ar @/^0.2pc/ @{-}[rr]\ar @/_0.2pc/ @{-}[rr]&&*=0{\bullet}\ar @/^0.2pc/ @{-}[rr]\ar @/_0.2pc/ @{-}[rr]&&*=0{\circ}\ar @/^0.2pc/ @{-}[dd]\ar @/_0.2pc/ @{-}[dd]\ar @/^0.2pc/ @{-}[rr]\ar @/_0.2pc/ @{-}[rr]&&*=0{\bullet}\ar@{-}[rr]\ar @/^0.2pc/ @{-}[rr]\ar @/_0.2pc/ @{-}[rr]&&*=0{\bullet}\\
*=0{\bullet}\ar@{-}[u]&*=0{\bullet}\ar@{-}[ru]&*=0{\bullet}\ar@{-}[rru]&*=0{\bullet}\ar @/^0.2pc/ @{-}[ru]\ar @/_0.2pc/ @{-}[ru]&&*=0{\bullet}\ar @/^0.2pc/ @{-}[lu]\ar @/_0.2pc/ @{-}[lu]&*=0{\bullet}\ar@{-}[llu]&*=0{\bullet}\ar@{-}[lu]&*=0{\bullet}\ar@{-}[u]\\
&*=0{\bullet}\ar@{-}[rru]&&*=0{\bullet}\ar@{-}[ruu]&*=0{\bullet}\ar@{-}[dd]\ar @/^0.2pc/ @{-}[dd]\ar @/_0.2pc/ @{-}[dd]&*=0{\bullet}\ar@{-}[luu]&&*=0{\bullet}\ar@{-}[llu]&\\
*=0{\bullet}\ar@{-}[r]&*=0{\bullet}\ar@{-}[rruu]\ar @/^0.2pc/ @{-}[rruu]\ar @/_0.2pc/ @{-}[rruu]&*=0{\bullet}\ar@{-}[ruu]&*=0{\bullet}\ar@{-}[ru]&&*=0{\bullet}\ar@{-}[lu]&*=0{\bullet}\ar@{-}[luu]&*=0{\bullet}\ar@{-}[lluu]\ar @/^0.2pc/ @{-}[lluu]\ar @/_0.2pc/ @{-}[lluu]&*=0{\bullet}\ar@{-}[l]\\
&*=0{\bullet}\ar@{-}[u]&&*=0{\bullet}\ar@{-}[r]&*=0{\bullet}&*=0{\bullet}\ar@{-}[l]&&*=0{\bullet}\ar@{-}[u]&
} 
$$

If there is only one radial part, by 
$$
2+\cfrac{2}{3+\cfrac{3}{3}}=\frac{30}{12}\,, \qquad\qquad 
\xymatrix@=16pt{ 
&*=0{\bullet}&*=0{\bullet}&&*=0{\bullet}&*=0{\bullet}&&*=0{\bullet}\\
*=0{\circ}\ar @{-} [ru]&&&*=0{\bullet}\ar @{-} [lu]\ar @{-} [ru]\ar @/^/ @{-}[rrr]\ar @{-} [rrr] \ar @{-} [rrr]\ar @/_/ @{-}[rrr]\ar @/^/ @{-}[lll]\ar @/_/ @{-}[lll]&&&*=0{\bullet}\ar @{-} [lu]\ar @{-} [ru]&
}
$$ 
the topological index is given by $30$.  
\begin{align*}
&\xymatrix@=16pt{ 
*=0{\bullet}\ar@{-}[d]&&&&&&&&\\
*=0{\bullet}\ar@{-}[rrd]\ar @/^0.4pc/ @{-}[rrd]\ar @/_0.4pc/ @{-}[rrd]&*=0{\bullet}\ar@{-}[l]&*=0{\bullet}\ar@{-}[d]&*=0{\bullet}\ar@{-}[ld]&&&&&\\
&&*=0{\bullet}\ar @/^0.3pc/ @{-}[rrd]\ar @/_0.3pc/ @{-}[rrd]&*=0{\bullet}\ar@{-}[rd]&*=0{\bullet}\ar@{-}[d]&*=0{\bullet}\ar@{-}[rd]&*=0{\bullet}\ar@{-}[d]&*=0{\bullet}\ar@{-}[rd]&*=0{\bullet}\ar@{-}[d]\\
&&&&*=0{\circ}\ar @/^0.3pc/ @{-}[rr]\ar @/_0.3pc/ @{-}[rr]&&*=0{\bullet}\ar@{-}[rr]\ar @/^0.4pc/ @{-}[rr]\ar @/_0.4pc/ @{-}[rr]&&*=0{\bullet}
}\\
&\underbrace{\phantom{AAAAAAAAAAA}}_{B^{(1)}}
\end{align*} 
If there are two radial parts, by applying Lemma \ref{lem:branched} to  
$$
B^{(1)}+\cfrac{2}{3+\cfrac{3}{3}},\quad\hbox{where}\quad B^{(1)}=3+\cfrac{2}{3+\cfrac{3}{3}}\,, 
$$ 
we have 
\begin{align*}
3+\cfrac{2}{3+\cfrac{3}{3}}+\cfrac{2}{3+\cfrac{3}{3}}&=3+\frac{6}{12}+\frac{6}{12}\\
&=\frac{576}{144}\,.   
\end{align*}  
Hence, the topological index is given by $576$.  
\begin{align*}
&\overbrace{\phantom{AAAAAAAAAAA}}^{B^{(2)}}\qquad\qquad\qquad\qquad\overbrace{\phantom{AAAAAAAAAAA}}^{B^{(3)}}\\
&\xymatrix@=16pt{ 
*=0{\bullet}\ar@{-}[d]&&&&&&&&\\
*=0{\bullet}\ar@{-}[rrd]\ar @/^0.4pc/ @{-}[rrd]\ar @/_0.4pc/ @{-}[rrd]&*=0{\bullet}\ar@{-}[l]&*=0{\bullet}\ar@{-}[d]&*=0{\bullet}\ar@{-}[ld]&&&&&\\
&&*=0{\bullet}\ar @/^0.3pc/ @{-}[rrd]\ar @/_0.3pc/ @{-}[rrd]&*=0{\bullet}\ar@{-}[rd]&*=0{\bullet}\ar@{-}[d]&*=0{\bullet}\ar@{-}[rd]&*=0{\bullet}\ar@{-}[d]&*=0{\bullet}\ar@{-}[rd]&*=0{\bullet}\ar@{-}[d]\\
&&&&*=0{\circ}\ar @/^0.3pc/ @{-}[rr]\ar @/_0.3pc/ @{-}[rr]&&*=0{\bullet}\ar@{-}[rr]\ar @/^0.4pc/ @{-}[rr]\ar @/_0.4pc/ @{-}[rr]&&*=0{\bullet}\\
&&*=0{\bullet}\ar @/^0.3pc/ @{-}[rru]\ar @/_0.3pc/ @{-}[rru]&*=0{\bullet}\ar@{-}[ru]&&&&&\\
*=0{\bullet}\ar@{-}[rru]\ar @/^0.4pc/ @{-}[rru]\ar @/_0.4pc/ @{-}[rru]&&*=0{\bullet}\ar@{-}[u]&*=0{\bullet}\ar@{-}[lu]&&&&&\\
*=0{\bullet}\ar@{-}[u]&*=0{\bullet}\ar@{-}[lu]&&&&&&&
}\quad 
\xymatrix@=16pt{ 
*=0{\bullet}\ar@{-}[d]&&&&&&&&\\
*=0{\bullet}\ar@{-}[rrd]\ar @/^0.4pc/ @{-}[rrd]\ar @/_0.4pc/ @{-}[rrd]&*=0{\bullet}\ar@{-}[l]&*=0{\bullet}\ar@{-}[d]&*=0{\bullet}\ar@{-}[ld]&&&&&\\
&&*=0{\bullet}\ar @/^0.3pc/ @{-}[rrd]\ar @/_0.3pc/ @{-}[rrd]&*=0{\bullet}\ar@{-}[rd]&*=0{\bullet}\ar@{-}[d]&*=0{\bullet}\ar@{-}[rd]&*=0{\bullet}\ar@{-}[d]&*=0{\bullet}\ar@{-}[rd]&*=0{\bullet}\ar@{-}[d]\\
&&&*=0{\bullet}\ar@{-}[r]&*=0{\circ}\ar @/^0.3pc/ @{-}[rr]\ar @/_0.3pc/ @{-}[rr]&&*=0{\bullet}\ar@{-}[rr]\ar @/^0.4pc/ @{-}[rr]\ar @/_0.4pc/ @{-}[rr]&&*=0{\bullet}
}
\end{align*} 
If there are three radial parts, by applying Lemma \ref{lem:branched} two times to 
\begin{align*}
B^{(2)}+\cfrac{2}{3+\cfrac{3}{3}},\quad\hbox{where}\quad &B^{(2)}=B^{(3)}+\cfrac{2}{3+\cfrac{3}{3}}\\
\hbox{and}\quad &B^{(3)}=4+\cfrac{2}{3+\cfrac{3}{3}}  
\end{align*} 
we have 
\begin{align*}
4+\cfrac{2}{3+\cfrac{3}{3}}+\cfrac{2}{3+\cfrac{3}{3}}+\cfrac{2}{3+\cfrac{3}{3}}&=3+\frac{6}{12}+\frac{6}{12}+\frac{6}{12}\\
&=\frac{9504}{1728}\,.   
\end{align*}  
Hence, the topological index is given by $9504$.  In general, if there are $m$ radial parts, the corresponding continued fraction becomes 
$$
m+1+\underbrace{\cfrac{2}{3+\cfrac{3}{3}}+\cdots+\cfrac{2}{3+\cfrac{3}{3}}}_m=\frac{(18 m+12)\cdot 12^{m-1}}{12^m}\,. 
$$ 
This type of continued fractions is called the {\it multidimensional continued fraction}. Then, the topological index is given by $(18 m+12)\cdot 12^{m-1}$.  
In the case of the first figure the Hosoya index is given by $(18\cdot 8+12)\cdot 12^7=5589762048$ because this radial graph corresponds with the $8$-dimensional continued fraction  
$$
9+\underbrace{\cfrac{2}{3+\cfrac{3}{3}}+\cdots+\cfrac{2}{3+\cfrac{3}{3}}}_8=\frac{(18\cdot 8+12)\cdot 12^7}{12^8}\,. 
$$

In general, consider a $m$-dimensional radial graph in which each radial part $\mathfrak R$ is expressed as follows.  
$$ 
a_0+\frac{b_1}{A_1}{\atop+}\frac{b_2}{A_2}{\atop+\dots+}\frac{b_n}{A_n}\qquad\qquad 
\xymatrix@=16pt{ 
*=0{\bullet}&*=0{\bullet}&*=0{\bullet}&&*=0{\phantom{A}}&&*=0{\phantom{A}}&&&&*=0{\phantom{A}}&\\
&*=0{\circ}\ar@{-}[lu]^{a_0-1}\ar@{-}[u]\ar@{-}[ru]\ar@/^/@{-}[rrr]\ar@/^0.2pc/@{-}[rrr]\ar@/_0.2pc/@{-} [rrr]  
\ar@/_/@{-}[rrr]_{b_1}&&&*=0{\bullet}\ar@{.}[u]\ar@/^0.2pc/@{.}[u]_{\mathfrak A_1}\ar@/_0.2pc/@{.}[u]\ar@/^/@{-}[rr]\ar@/^0.2pc/@{-}[rr]\ar@/_0.2pc/@{-} [rr] 
\ar@/_/@{-}[rr]_{b_2}&&*=0{\bullet}\ar@{.}[u]\ar@/^0.2pc/@{.}[u]\ar@/_0.2pc/@{.}[u]_{\mathfrak A_2}\ar @/^/@{.}[rr]\ar@/^0.2pc/@{.}[rr]\ar@/_0.2pc/@{.}[rr]  
\ar@/_/@{.}[rr]&&*=0{\bullet}&&*=0{\bullet}\ar@{.}[u]\ar@/^0.2pc/@{.}[u]\ar@/_0.2pc/@{.}[u]_{\mathfrak A_n}\ar@/^/@{-}[ll]^{b_{n}}\ar@/^0.2pc/@{-}[ll]\ar@/_0.2pc/@{-}[ll]\ar@/_/@{-}[ll]&
}
$$  
Here, $a_0$, $b_1,\dots,b_n$ are positive integers. $A_1,\dots,A_n$ are positive rational numbers. If $A_i$ ($i=1,2,\dots,n$) is a positive integer, the corresponding subgraph $\mathfrak A_i$ denotes $A_i-1$ branches. If $A_i$ is a positive fraction, the corresponding subgraph $\mathfrak A_i$ denotes another caterpillar-bond graph or caterpillar-bond graph including still more caterpillar-bond graphs. Then $A_i$ can be expressed as a continued fraction or a branched continued fraction.     

By applying Lemma \ref{lem:branched} repeatedly, we have the following.  

\begin{theorem}
The Hosoya index of the $m$-fold radial graph, in which each radial part $\mathfrak R$ is  connected only by one vertex marked with white circle,  
is given by the numerator of the convergent of the multidimensional continued fraction 
$$
m(a_0-1)+1+\underbrace{\frac{b_1}{A_1}{\atop+}\frac{b_2}{A_2}{\atop+\dots+}\frac{b_n}{A_n}+\cdots+\frac{b_1}{A_1}{\atop+}\frac{b_2}{A_2}{\atop+\dots+}\frac{b_n}{A_n}}_{m}\,. 
$$ 
\label{th:radial} 
\end{theorem}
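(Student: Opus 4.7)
The plan is to prove the formula by induction on the number $m$ of radial parts, with Lemma \ref{lem:branched} supplying the inductive step. The base case $m=1$ will be immediate: a single radial part is itself a (branched) caterpillar-bond graph, so Lemma \ref{lem:branched} gives its Hosoya index as the numerator of
$$a_0 + \frac{b_1}{A_1} + \frac{b_2}{A_2} + \dots + \frac{b_n}{A_n},$$
where each $A_i$ is interpreted as a positive rational number representing either a string of pendant branches (integer case) or a further nested sub-caterpillar-bond graph that has already been reduced by an earlier application of Lemma \ref{lem:branched}.

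For the inductive step I would track what happens at the shared central (white-circle) vertex when one more radial part is glued on. In the $\ell$-fold radial graph the central vertex carries exactly $\ell(a_0-1)$ pendant branches together with $\ell$ outgoing edges, each leading to the first internal node of one radial arm. Attaching an $(\ell{+}1)$-st radial part adds $a_0-1$ fresh pendant branches and one new outgoing edge whose continued-fraction tail is $\frac{b_1}{A_1} + \dots + \frac{b_n}{A_n}$. Hence the enlarged graph is, on the nose, the caterpillar-bond graph in which the value at the centre is raised from $\ell(a_0-1)+1$ to $(\ell{+}1)(a_0-1)+1$ and an additional branch corresponding to $\frac{b_1}{A_1} + \dots + \frac{b_n}{A_n}$ is attached at the centre. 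Applying Lemma \ref{lem:branched} to this enlarged graph then yields the Hosoya index as the numerator of
$$(\ell{+}1)(a_0-1)+1 + \underbrace{\frac{b_1}{A_1} + \dots + \frac{b_n}{A_n} + \cdots + \frac{b_1}{A_1} + \dots + \frac{b_n}{A_n}}_{\ell+1},$$
which is exactly the target formula for $m=\ell{+}1$, closing the induction.

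The main obstacle will be bookkeeping rather than any deep combinatorial identity. I would need to verify that Lemma \ref{lem:branched} really does iterate cleanly when a given $A_i$ is itself specified by a further branched continued fraction, and that the Hosoya-index numerators track correctly through every substitution. This is guaranteed by the unreduced-arithmetic convention stressed in the Remark after Theorem \ref{th:non-uniform}: numerators and denominators must never be cancelled, since it is the unreduced numerator---not the reduced one---that equals the Hosoya index. Once that invariant is respected throughout the recursion on nested $A_i$'s, the induction goes through and the claimed multidimensional continued fraction emerges naturally.
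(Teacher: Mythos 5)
Your proposal is correct and follows essentially the same route as the paper: the paper's entire argument is "by applying Lemma \ref{lem:branched} repeatedly" (illustrated by the worked examples with one, two, and three radial parts), and your induction on $m$, absorbing one arm at a time into the central value while the centre accumulates $a_0-1$ pendant branches per arm, is exactly the formalization of that repetition. Your explicit attention to the unreduced-arithmetic convention matches the Remark the paper relies on.
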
 

When $m=2$, by setting 
$b:=b_1=b_2=\dots$ and 
$$
A_1=A_2=\dots=m'+\cfrac{b}{m'+\cfrac{b}{m'+\cfrac{b}{\ddots}+\cfrac{b}{\ddots}}+\cfrac{b}{m'+\cfrac{b}{\ddots}+\cfrac{b}{\ddots}}}\,,  
$$ 
a periodic $2$-dimensional continued fraction (\cite{BK}) with period $1$ can be yielded as 
\begin{gather*}m'+\dfrac{b}{m'+\dfrac{b}{m'+\dfrac{b}{m'+{}_{\ddots}}+\dfrac{b}{m'+{}_{\ddots}}}+
    \dfrac{b}{m'+\dfrac{b}{m'+{}_{\ddots}}+\dfrac{b}{m'+{}_{\ddots}}}}\\+\dfrac{b}{m'+\dfrac{b}{m'+\dfrac{b}{m'+{}_{\ddots}}+\dfrac{b}{m'+{}_{\ddots}}}+
    \dfrac{b}{m'+\dfrac{b}{m'+{}_{\ddots}}+\dfrac{b}{m'+{}_{\ddots}}}},\end{gather*}
where $m'=2(a_0-1)+1$. 
  
For example, for $a_0=b=2$, the third convergent of the periodic $2$-dimensional continued fraction with period $1$ is given by 
$$
3+\cfrac{2}{3+\cfrac{2}{3+\cfrac{2}{3}+\cfrac{2}{3}}+\cfrac{2}{3+\cfrac{2}{3}+\cfrac{2}{3}}}+\cfrac{2}{3+\cfrac{2}{3+\cfrac{2}{3}+\cfrac{2}{3}}+\cfrac{2}{3+\cfrac{2}{3}+\cfrac{2}{3}}}\left(=\frac{143118495}{35605089}\right)\,, 
$$ 
and the corresponding graph is given by the following.  
$$
\xymatrix@=12pt{ 
&*=0{\bullet}\ar@{-}[r]&*=0{\bullet}\ar@{=}[dd]&*=0{\bullet}\ar@{-}[l]&&&&&&*=0{\bullet}\ar@{-}[r]&*=0{\bullet}\ar@{=}[dd]&*=0{\bullet}\ar@{-}[l]&\\
*=0{\bullet}\ar@{-}[d]&&&*=0{\bullet}\ar@{-}[ld]&&&&&&*=0{\bullet}\ar@{-}[rd]&&&*=0{\bullet}\ar@{-}[d]\\
*=0{\bullet}\ar@{=}[rr]&&*=0{\bullet}\ar@{=}[rrdd]&&&&&&&&*=0{\bullet}\ar@{=}[lldd]&&*=0{\bullet}\ar@{=}[ll]\\
*=0{\bullet}\ar@{-}[u]&*=0{\bullet}\ar@{-}[ru]&&&*=0{\bullet}\ar@{-}[d]&&*=0{\bullet}\ar@{-}[d]&&*=0{\bullet}\ar@{-}[d]&&&*=0{\bullet}\ar@{-}[lu]&*=0{\bullet}\ar@{-}[u]\\
&&&&*=0{\bullet}\ar@{=}[rr]\ar@{=}[lldd]&&*=0{\circ}&&*=0{\bullet}\ar@{=}[ll]\ar@{=}[rrdd]&&&&\\
*=0{\bullet}\ar@{-}[d]&*=0{\bullet}\ar@{-}[rd]&&&*=0{\bullet}\ar@{-}[u]&&*=0{\bullet}\ar@{-}[u]&&*=0{\bullet}\ar@{-}[u]&&&*=0{\bullet}\ar@{-}[ld]&*=0{\bullet}\ar@{-}[d]\\
*=0{\bullet}\ar@{=}[rr]&&*=0{\bullet}\ar@{=}[dd]&&&&&&&&*=0{\bullet}\ar@{=}[dd]&&*=0{\bullet}\ar@{=}[ll]\\
*=0{\bullet}\ar@{-}[u]&&&*=0{\bullet}\ar@{-}[lu]&&&&&&*=0{\bullet}\ar@{-}[ru]&&&*=0{\bullet}\ar@{-}[u]\\
&*=0{\bullet}\ar@{-}[r]&*=0{\bullet}&*=0{\bullet}\ar@{-}[l]&&&&&&*=0{\bullet}\ar@{-}[r]&*=0{\bullet}&*=0{\bullet}\ar@{-}[l]&
}
$$ 
Therefore, the Hosoya index of this graph is given by $143118495$.

\section{Final comments}  

The graph ${\rm U}_{n,m}^{(r,s)}$ has a corresponding negative continued fraction. However, when this graph attachs other graph or the number of branches of each vertex is different, we still have no idea how to calculate its Hosoya index efficiently, in particular, by using continued fractions. For example, no suitable continued fraction has been found for the following combined graph. The right side is known as the molecular formula of naphthalene ${\rm C}_{10}{\rm H}_{8}$.  
$$
\xymatrix@=12pt{ 
&&*=0{\bullet}\ar@{-}[ldd]\ar@/^0.2pc/@{-}[rdd]\ar@/_0.2pc/@{-} [rdd]&&\\
&&&&\\
*=0{\bullet}\ar@{-}[dd]&*=0{\bullet}\ar@/^0.2pc/@{-}[rdd]\ar@/_0.2pc/@{-} [rdd]&&*=0{\bullet}\ar@{-}[ldd]&\\
&&&&\\
*=0{\bullet}\ar@/^0.2pc/@{-}[rr]\ar@/_0.2pc/@{-} [rr]&&*=0{\bullet}&&*=0{\bullet}\ar@{-}[ll]
} 
\qquad\qquad 
\xymatrix@=12pt{ 
&*=0{\bullet}\ar@{=}[ld]\ar@{-}[rd]&&*=0{\bullet}\ar@{-}[ld]\ar@{=}[rd]&\\
*=0{\bullet}\ar@{-}[d]&&*=0{\bullet}\ar@{=}[d]&&*=0{\bullet}\ar@{-}[d]\\
*=0{\bullet}\ar@{=}[rd]&&*=0{\bullet}\ar@{-}[ld]\ar@{-}[rd]&&*=0{\bullet}\ar@{=}[ld]\\
&*=0{\bullet}&&*=0{\bullet}&
} 
$$

\section{Acknowledgment} 

The author thanks the anonymous referee for detailed comments and suggestions, which improved the quality of the paper.

\end{document}